\documentclass{article}
\usepackage{algorithm}
\usepackage{overpic}
\usepackage{eepic}
\usepackage{subfig}
\usepackage{tikz}
\usepackage{graphicx}
\usepackage{algorithmic}
\usepackage{array}
\usepackage{bm}
\usepackage{amsfonts}
\usepackage{amsmath}
\usepackage{amssymb}
\usepackage{mathrsfs}

\DeclareMathOperator*{\argmin}{arg\,min}
\newtheorem{thm}{Theorem}[section]
\newtheorem{remark}[thm]{Remark}

\numberwithin{equation}{section}
\newtheorem{lemma}[thm]{Lemma}
\newtheorem{definition}[thm]{Definition}
\newenvironment{proof}{{\bf Proof.}}{\hfill$\square$\vskip.5cm}

\title{Rank Approximation of a Tensor with Applications in Color Image and Video Processing}

\author{Ramin Goudarzi\footnote{Department of Mathematics, University of Alabama at Birmingham, Birmingham, AL 35294, USA, ramin@uab.edu.}  \   Carmeliza Navasca\footnote{Department of Mathematics, University of Alabama at Birmingham, 1300 University Boulevard, Birmingham, AL, 35294, USA, cnavasca@uab.edu} \ Da Yan\footnote{Department of Computer Science, University of Alabama at Birmingham, Birmingham, AL 35294, USA, yanda@uab.edu.}}

\date{\today}

\begin{document}

\maketitle
 
\begin{abstract}
We propose a block coordinate descent type algorithm for estimating the rank of a given tensor. In addition, the algorithm provides the canonical polyadic decomposition of a tensor. In order to estimate the tensor rank we use sparse optimization method using $\ell_1$ norm. The algorithm is implemented on single moving object videos and color images for approximating the rank.
\setcounter{section}{0}

%We present an algorithm for decomposing a third-order partially symmetric tensor into a sum of rank-one partially symmetric tensors. Under a different tensor matricization, we can 
% symmetric outer product decomposition which decomposes an partially/fully symmetric tensor into a sum of rank-one partially/fully symmetric tensors. 
%We study the convergence of the Regularized Alternating Least-Squares (RALS) algorithm for tensor decompositions. 
%As a main result, we have shown that the limit points of  the converging subsequences of the RALS are the critical points
%of the least squares cost functional. Some numerical examples indicate a faster convergence rate for the RALS in comparison to
%to the usual alternating least squares method. \\ %using the techniques found in ([3]).

\vspace{8pt}
\noindent
%{\small \bf Keywords: tensor decomposition, alternating least-squares, nonlinear Gauss-Seidel} 
\end{abstract}

\section{Introduction}

In 1927, Hitchcock \cite{Hitch1,Hitch2} proposed the idea of the polyadic form of a tensor, i.e., expressing a tensor, multilinear array, as the sum of a finite number of rank-one tensors. This decomposition is called the canonical polyadic (CP) decompositon; it is known as CANDECOMP or PARAFAC. It has been extensively applied to many problems in various engineering \cite{Sid1,Sid2,Acar,DeVos} and science \cite{Smilde,Kroonenberg}. Specifically, tensor methods have been applied in many multidimensional datasets in signal processing applications \cite{Comon1,Comon3,Lieven}, color image processing \cite{colorpaper1,colorpaper2} and video processing \cite{bouwmans2,bouwmans1}. Most of these applications rely on decomposing a tensor data into its low rank form to be able to perform efficient computing and to reduce memory requirements. In computer vision, detection of moving objects in video processing relies on foreground and background separation, i.e. the separation of the moving objects called foreground from the static information called background, requires low rank representation of video tensor. In color image processing, the rgb channels in color image representation requires extensions of the matrix models of gray-scale images to low rank tensor methods.
There are several numerical techniques \cite{comon,domanov,kolda,b1,Sid1} for approximating a low rank tensor into its CP decomposition, but they do not give an approximation of the minimum rank. In fact, most low rank tensor algorithms require an a priori tensor rank to find the tensor decomposition. Several theoretical results
\cite{Kruskal1,Landsberg} on tensor rank can help, but they are limited to low-multidimensional and low order tensors.

In this work, the focus is on finding an estimation of the tensor rank and its rank-one tensor decomposition (CP) of a given tensor. There are also algorithms \cite{Comon1,brachat} which give tensor rank, but they are specific to symmetric tensor decomposition over the complex field using algebraic geometry tools. Our proposed algorithm addresses two difficult problems for the CP decomposition: (a) one is that finding the rank of tensors is a NP-hard problem \cite{hillar} and (b) the other is that tensors can be ill-posed \cite{silva} and failed to have their best low-rank approximations. 
 
The problem of finding the rank of a tensor can be formulated as a constrained optimization problem. 
\begin{equation*}
\min_\alpha \Vert \alpha \Vert_0 \, \, \, 
\text{s.t.} \, \, \, \,  \mathcal{X} = \sum_{r=1}^R \alpha_r ( a_r \circ b_r \circ c_r)
\end{equation*}
where $\Vert \alpha \Vert_0$ represents the total number of non-zero elements of $\alpha$. 
The rank optimization problem is NP hard and so to make it more tractable, the following formulation \cite{b15} is used:
\begin{equation*}
\min_{A,B,C,\alpha} \frac{1}{2} \Vert \mathcal{X} - \sum_{r=1}^R \alpha_r ( a_r \circ b_r \circ c_r) \Vert_F^2 + \gamma \Vert \alpha \Vert_1
\end{equation*}
where $\gamma>0$ is the regularization parameter and the objective function is a composition of smooth and non-smooth functions. Our formulation includes a Tikhonov type regularization:
\begin{equation*}
    \min \frac{1}{2} \Vert \mathcal{X} - \sum_{r=1}^R \alpha_r ( a_r \circ b_r \circ c_r) \Vert_F^2  +\frac{\lambda}{2} (\Vert A\Vert_F^2+\Vert B\Vert_F^2+\Vert C\Vert_F^2)+\gamma \Vert \alpha \Vert_1.
\end{equation*}
The added Tikhonov regularization has the effect of forcing the factor matrices to have the equal norm. Moreover, this formulation and its numerical methods described later give an overall improvement in the accuracy and thus, memory requirements of the tensor model found in \cite{b15}.

\subsection{Organization}
Our paper is organized as follows. In Section 2, we provide some notations and terminologies
used throughout this paper. In Section 3,
we formulate an $l_1$-regularization optimization to the low-rank approximation of tensors.
In Section 4, we describe a numerical method to solve the $l_1$-regularization optimization
by using a proximal alternating minimization technique for the rank and an alternating least-squares for the decomposition.
In Section 5, we provide an analysis of convergence of the numerical methods.
The numerical experiments in Section 6 consist of simulated low rank tensor, color images and videos.
Finally, our conclusion and future work are given in Section 7.

\section{Preliminaries}
We denote the scalars in $\mathbb{R}$ with lower-case letters $(a,b,\ldots)$ and the vectors with lower-case letters $({a},{b},\ldots)$.  The matrices are written as upper-case letters $({A}, {B},\ldots)$ and the symbols for tensors are calligraphic letters $(\mathcal{A},\mathcal{B},\ldots)$. The subscripts represent the following scalars:  $\mathcal{(A)}_{ijk}=a_{ijk}$, $({A})_{ij}=a_{ij}$, $({a})_i=a_i$ and the $r$-th column of a matrix ${A}$ is ${a_r}$. The matrix sequence is denoted $\{{A}^k\}$. An Nth order tensor $\mathcal{X} \in \mathbb{R}^{I_1 \times I_2 \times \cdots \times I_N}$ is a multidimensional array with entries $\mathcal{(X)}_{i_1i_2\cdots i_N}=x_{i_1i_2\cdots i_N}$ for $i_k \in \{1, \hdots, I_k \}$ where $k \in {1,\hdots,N}$. In particular, a third order tensor $\mathcal{X} \in \mathbb{R}^{I \times J \times K}$ is a multidimensional array with entries $x_{i j k}$ for $i \in \{1, \hdots, I \}$, $j \in \{1, \hdots, J \}$ and $k \in \{1, \hdots, K \}$.

Here we present some standard definitions and relations in tensor analysis. The Kronecker product of two vectors $a\in \mathbb{R}^I$ and $b\in \mathbb{R}^J$ is denoted by $a \otimes b \in \mathbb{R}^{IJ}$:
\begin{equation*}
a \otimes b = \left(a_1 b^T \hdots  a_I b^T \right)^T.
\end{equation*}
The Khatri-Rao (column-wise Kronecker) product (see\cite{b12}) of two matrices $A \in \mathbb{R}^{I\times J}$ and $B \in \mathbb{R}^{K\times J}$ is defined as  
\begin{equation*}
    A \odot B = ( a_1 \otimes b_1 \hdots a_J \otimes b_J).
\end{equation*}
 The outer product of three vectors $a \in \mathbb{R}^I$, $b \in \mathbb{R}^J$, $c \in \mathbb{R}^K$ is a third order tensor $\mathcal{X}= a \circ b \circ c$ with the entries defined as follows:
\begin{equation*}
x_{ijk} = a_i b_j c_k.
\end{equation*}

\begin{definition}[vec]
Given a matrix $W \in \mathbb{R}^{I \times J}$, the function $\text{vec}: \mathbb{R}^{I \times J} \rightarrow \mathbb{R}^{I \cdot J}$ where $vec(W)=v$ is a vector of size $I\cdot J$ obtained from column-stacking the column vectors of  $W$; i.e. 
$$\text{vec}(W)_l = v(l)=w_{ij}$$
where  $l=j+(k-1)J$.
 \end{definition}

The vectorization of a third order tensor $\mathcal{X} \in \mathbb{R}^{I \times J \times K}$ is the process of transforming the tensor into a column vector, the $\text{vec} : \mathbb{R}^{I \times J \times K} \to \mathbb{R}^{IJK} $ map is defined as   
\begin{equation*}
\text{vec} (\mathcal{X})_{\beta(i,j,k)} = x_{ijk}
\end{equation*} 
where $\beta(i,j,k)=i+(j-1)I+(k-1)IJ$. Using the definitions above, we get 
\begin{equation*}
\text{vec} ( a \circ b \circ c ) = c \otimes b \otimes a
\end{equation*}

\begin{definition}[Mode-$n$ matricization]
Matricization is the process of reordering the elements of an $N$th order tensor into a matrix. The mode-$n$ matricization of a tensor $\mathcal{X} \in \mathbb{R}^{I_1 \times I_2 \times \cdots \times I_N}$ is denoted by $\mathcal{X}_{(n)}$ and arranges the mode-$n$ columns to be the columns of the resulting matrix.  The mode-$n$ column, ${x_{i_1\cdots i_{n-1} : i_{n+1} \cdots i_N}}$, is a vector obtained by fixing every index with the exception of the $n$th index.
\end{definition}

If we use a map to express such matricization process for any $N$th order tensor $\mathcal{T} \in \mathbb{R}^{I_1 \times I_2 \times \cdots \times I_N}$, that is, the tensor element $(i_1, i_2, \dots, i_N)$ maps to matrix element $(i_n, j)$, then there is a formula to calculate $j$: 
$$j=1+\sum_{\substack{k=1\\ k\neq n}}^{N}(i_k -1)J_k \quad \text{with} \quad J_k =\prod_{\substack{m=1\\ m\neq n}}^{k-1}I_m.$$
%So, given a third-order tensor $\mathcal{X} \in \mathbb{R}^{I \times J \times K}$, the mode-$1$, mode-$2$ and mode-$3$ matricizations of $\mathcal{X}$, respectively, are:
%\begin{eqnarray}\label{matricize}
  %\bold{X_{(1)}}&=&[\bold{x_{:11}}, \dots, \bold{x_{:J1}}, \bold{x_{:12}}\dots,\bold{x_{:J2}}, \dots, \bold{x_{:1K}}, \dots, \bold{x_{:JK}}] , \nonumber\\
  %\bold{X_{(2)}}&=&[\bold{x_{1:1}}, \dots, \bold{x_{I:1}}, \bold{x_{1:2}}\dots,\bold{x_{I:2}}, \dots, \bold{x_{1:K}}, \dots, \bold{x_{I:K}}] ,\\
   %\bold{X_{(3)}}&=&[\bold{x_{11:}}, \dots, \bold{x_{I1:}}, \bold{x_{12:}}\dots,\bold{x_{I2:}}, \dots, \bold{x_{1J:}}, \dots, \bold{x_{IJ:}}]. %\nonumber
%\end{eqnarray}

For example, the tensor unfolding or matricization of a third order tensor $\mathcal{X}$ is the process or rearranging the elements of $\mathcal{X}$ into a matrix. The mode-$n$ ($n=1,2,3$) matricization is denoted by $\mathcal{X}_{(n)}$ and the elements of it can be expressed by the following relations:
\begin{equation*}
    \mathcal{X}_{(1)} (i,l) =  x_{ijk}, \, \,  \text{where} \,\, l=j+(k-1)J \,\, \text{and} \, \, \mathcal{X}_{(1)} \in \mathbb{R}^{I \times JK}
\end{equation*}
\begin{equation*}
    \mathcal{X}_{(2)} (j,l) =  x_{ijk}, \, \,  \text{where} \,\, l=i+(k-1)I \,\, \text{and} \, \,\mathcal{X}_{(2)} \in \mathbb{R}^{J \times KI}
\end{equation*}
\begin{equation*}
    \mathcal{X}_{(3)} (k ,l) =  x_{ijk}, \, \,  \text{where} \,\, l=i+(j-1)I  \,\, \text{and} \, \, \mathcal{X}_{(3)} \in \mathbb{R}^{K \times IJ}
\end{equation*}

\subsection{CP decomposition and the Alternating Least-Squares Method}
In 1927, Hitchcock \cite{Hitch1}\cite{Hitch2} proposed the idea of the polyadic form of a tensor, i.e., expressing a tensor as the sum of a finite number of rank-one tensors. Today, this decomposition is called the canonical polyadic (CP); it is known as CANDECOMP or PARAFAC. It has been extensively applied to many problems in various engineering \cite{Sid1,Sid2,Acar,DeVos} and science \cite{Smilde,Kroonenberg}. The well-known iterative method for implementing the sum of rank one terms is the Alternating Least-Squares (ALS) technique. Independently, the ALS was introduced by Carrol and Chang \cite{JJ} and Harshman \cite{RAH} in 1970. Among those numerical algorithms, the ALS method is the most popular one since it is robust. However, the ALS has some drawbacks. For example, the convergence of ALS can be extremely slow.\\

The CP decomposition of a given third order tensor $\mathcal{X} \in \mathbb{R}^{I \times J \times K}$ factorizes it to a sum of rank one tensors.
\begin{equation}
\mathcal{X} \approx \sum_{r=1}^R \alpha_r ( a_r \circ b_r \circ c_r)
\end{equation}
For simplicity we use the notation $[A,B,C,\alpha]_R$ to represent the sum on the right hand side of the equation above, where  $A\in \mathbb{R}^{I\times R}$, $B\in \mathbb{R}^{J\times R}$ and $ C \in \mathbb{R}^{K\times R}$ are called factor matrices.
\begin{equation*}
    A=[a_1 \hdots a_R],\quad B=[b_1\hdots b_R], \quad C=[c_1 \hdots c_R]
\end{equation*}
The CP decomposition problem can be formulated as an optimization problem. Given $R$ the goal is to find vectors $a_r, b_r, c_r$, such that the distance between the tensor $\mathcal{X} $ and the sum of the outer products of $a_r, b_r, c_r$ is minimized. The Frobenius norm (sum of squares of the entries) is mainly used to measure the distance.  

\begin{equation} \label{eq2}
 \min_{A,B,C,\alpha} \frac{1}{2} \Vert \mathcal{X} - [A,B,C,\alpha]_R \Vert_F^2 
\end{equation} 
Using the Khatri-Rao product, the objective function in (\ref{eq2}) can be stated in the following four equivalent forms:
\begin{equation} \label{eq3}
    \frac{1}{2} \Vert \mathcal{X}_{(1)} - A \, \text{diag} (\alpha) (C  \odot B  )^T \Vert_F^2, 
\end{equation}
\begin{equation}\label{eq4}
     \frac{1}{2} \Vert \mathcal{X}_{(2)} - B \, \text{diag} (\alpha) (C  \odot A  )^T \Vert_F^2, 
\end{equation}
\begin{equation}\label{eq5}
 \frac{1}{2} \Vert \mathcal{X}_{(2)} - C \, \text{diag} (\alpha) (B  \odot A  )^T \Vert_F^2, 
\end{equation}
\begin{center}
and
\end{center}
\begin{equation}\label{eq6}
   \frac{1}{2} \Vert \text{vec}(\mathcal{X}) - \text{vec}([A,B,C,\alpha]_R) \Vert_2^2  
\end{equation}
All the functions in (\ref{eq3}), (\ref{eq4}), (\ref{eq5}) and (\ref{eq6}) are linear least squares problems with respect to matrices A, B, C and vector $\alpha$.  To find approximations to A,B,C, and $\alpha$, these four optimization problems (\ref{eq3})-(\ref{eq6}) are implemented iteratively and the minimizers are updated between each optimization problems (via Gauss-Seidel sweep) with a stopping criteria. This technique is called the Alternating Least Squares (ALS) Method. The ALS method is popular since it is robust and easily implementable. However, the ALS has some drawbacks. For example, the convergence of ALS can be extremely slow. Another drawback is the requirement of a tensor rank $R$ before a CP decomposition is approximated. The next sections deal with tensor rank approximation.

\section{Rank Approximation of a Tensor}
The problem of finding the rank of a tensor can be formulated as a constrained optimization problem. 
\begin{equation*}
\min_\alpha \Vert \alpha \Vert_0 \, \, \, 
\text{s.t.} \, \, \, \,  \mathcal{X}=[A,B,C, \alpha]_R
\end{equation*}
where $\Vert \alpha \Vert_0$ represents the total number of non-zero elements of $\alpha$. Since the problem is NP hard (ref), we replace $\Vert \alpha \Vert_0 $ by the $\ell_1 $ norm of $\alpha $. The $\ell_1$ norm is defined as the sum of absolute value of the elements of $\alpha$. So the rank approximation problem can be written as 
\begin{equation*}
\min_\alpha \Vert \alpha \Vert_1 \, \, \, \, \text{s.t.}  \, \, \, \mathcal{X} = [A,B,C, \alpha]_R
\end{equation*}
In order to obtain a CP decomposition of the given tensor $\mathcal{X}$ as well as the rank approximation, we formulate the rank approximation problem as follow:
\begin{equation}\label{eq71}
\min_{A,B,C,\alpha} \frac{1}{2} \Vert \mathcal{X} - [A,B,C, \alpha] \Vert_F^2 + \gamma \Vert \alpha \Vert_1
\end{equation}
where $\gamma>0$ is the regularization parameter. The objective function of the problem (\ref{eq71}) is non-convex and non-smooth. However, it is a composition of a smooth and non-smooth functions.

Moreover, it is known that CP decomposition of a tensor is unique up to scaling anf permutation of factor matrices. Note that 
\begin{equation*}
    [A,B,C,\alpha]_R=[cA,c^{-1}B,C,\alpha]_R
\end{equation*}
for a nonzero scalar $c \in \mathbb{R}$. In order to overcome the scaling indeterminacy, we add a Tikhonov type regularization term to our objective function \cite{TenDe}. Let f and g be the following:
%$t_1, t_2$ and $t_3$ represent $\Vert A \Vert_F^2, \Vert B \Vert_F^2$ and $\Vert C \Vert_F^2$ respectively, also 
\begin{equation}
f(A,B,C,\alpha)=\frac{1}{2} \Vert \mathcal{X} - [A,B,C, \alpha] \Vert_F^2 
\end{equation}
\begin{center}
and
\end{center}
\begin{equation}
g(\alpha)= \gamma \Vert \alpha \Vert_1
\end{equation}
which represent the fitting term and the $\ell_1 $ regularization term in (\ref{eq71}), then the rank approximation problem can be formulated as
\begin{equation}\label{eq72}
    \min f(A,B,C, \alpha) +\frac{\lambda}{2} (\Vert A\Vert_F^2)+\Vert B\Vert_F^2+\Vert C\Vert_F^2)+g(\alpha).
\end{equation}
The added Tikhonov regularization has the effect of forcing the factor matrices to have the equal norm. i.e.
\begin{equation*}
    \Vert A \Vert_F = \Vert B \Vert_F =\Vert C \Vert_F.
\end{equation*}
\cite{Paatero}, Now let $\Psi$ represent the objective function in (\ref{eq72}) collectively, then 
\begin{equation*}
    \Psi : \mathbb{R}^{R(I+J+K+1)} \to \mathbb{R}^+
\end{equation*}
where
\begin{align} \label{ObjEqu}
    \Psi (A,B,C,\alpha) &= f(A,B,C,\alpha) +\frac{\lambda}{2} ( \Vert A \Vert_F^2 + \Vert B \Vert_F^2 \\ 
    &+\Vert C \Vert_F^2 ) + g(\alpha) \nonumber
\end{align}
Let $\omega=(A, B, C, \alpha)$, when $B, C, \alpha$ are fix, we represent $f(\omega)$ by $f(A)$ and $\Psi(\omega)$ by $\Psi(A)$.
%For simplicity, we do not write those blocks in the argument of a function whenever they are fixed, for instant $\Psi (A)$ means the three blocks $B, C$ and $\alpha$ are fixed and $A$ is the only block variable.

\section{Approximation of Tensor Decomposition with Tensor Rank}
In this section we propose a block coordinate descent type algorithm for solving the problem (\ref{eq72}). We consider four blocks of variables with respect to $A,B,C$ and $\alpha$. In particular, at each inner iteration, we solve the following minimization problems 
\begin{equation} \label{eq7}
A^{k+1} = \argmin_{A} \{ f(A,B^k,C^k,\alpha^k) +\frac{\lambda}{2} \Vert A \Vert_F^2\}
\end{equation}
\begin{equation} \label{eq8}
B^{k+1} = \argmin_{B} \{ f(A^{k+1},B,C^k,\alpha^k) +\frac{\lambda}{2} \Vert B \Vert_F^2\}
\end{equation}
\begin{equation} \label{eq9}
C^{k+1} = \argmin_{A} \{ f(A^{k+1},B^{k+1},C,\alpha^k) +\frac{\lambda}{2} \Vert C \Vert_F^2\}
\end{equation}
\begin{center}
and
\end{center}
\begin{equation} \label{eq10}
\alpha^{k+1} = \argmin_{\alpha} \{ L_f^{\beta^k}(A^{k+1},B^{k+1},C^{k+1},\alpha) +g(\alpha)\}
\end{equation}
where $L_f^{\beta^k}(\alpha)$ represents the proximal linearization \cite{PALM} of $f$ with respect to $\alpha$, namely
\begin{equation*}
    L_f^{\beta^k}(\alpha)=\langle \alpha-\alpha^k, \nabla f_\alpha(\alpha^k) \rangle + \frac{1}{2\beta^k} \Vert \alpha - \alpha^k \Vert^2
\end{equation*}
Note that each of the minimization problems in (\ref{eq7})-(\ref{eq10})  is strictly convex, therefore $A,B,C,\alpha$ are uniquely determined at each iteration. In fact, the subproblems in (\ref{eq7})-(\ref{eq9}) are standard liner least squares problems with an additional Tikhonov regularization term. One can see by vectorization of the objective functions, for instance, the residual term in (\ref{eq3}) can be written as follows
\begin{equation*}
    \frac{1}{2} \Vert \text{vec} (\mathcal{X}_{(1)} ) - ( ((C \odot B)  \text{diag}(\alpha)) \otimes I) \text{vec}(A) \Vert_2^2
\end{equation*}
Since the objective functions in (\ref{eq7})-(\ref{eq9}) are strictly convex, the first order optimality condition is sufficient for a point to be minimum. In other words, the exact solutions of (\ref{eq7})-(\ref{eq9}) can be given be the following normal equations
\begin{equation*}
    A (E^k (E^k)^T +\lambda I ) = \mathcal{X}_{(1)} (E^k)^T,
\end{equation*}
\begin{equation*}
    B (F^k (F^k)^T +\lambda I ) = \mathcal{X}_{(2)} (F^k)^T,
\end{equation*}
\begin{center}
and
\end{center}
\begin{equation*}
    C (G^k (G^k)^T +\lambda I ) = \mathcal{X}_{(3)} (G^k)^T
\end{equation*}
where  $E^k = \text{diag}(\alpha^k) (C^k \odot B^k)^T$, $F^k = \text{diag}(\alpha^k) (C^k \odot A^{k+1})^T$ and  $G^k = \text{diag}(\alpha^k) (B^{k+1} \odot A^{k+1})^T$.
%\begin{equation*}
%    E^k = \text{diag}(\alpha^k) (C^k \odot B^k)^T
%\end{equation*}
%\begin{equation*}
%    F^k = \text{diag}(\alpha^k) (C^k \odot A^{k+1})^T
%\end{equation*}
%\begin{equation*}
%    G^k = \text{diag}(\alpha^k) (B^{k+1} \odot A^{k+1})^T.
%\end{equation*}

To update $\alpha$ in (\ref{eq10}), we discuss the proximal operator first.
\begin{definition}{\textit{(proximal operator})}
Let $g:\mathbb{R}^n \to \mathbb{R}$ be a lower semicontinuous convex function, then the proximal operator of $g$ with parameter $\beta>0$ is defined as follow
\begin{equation} \label{ProxDef}
    \text{prox}_{\beta g} (y) = \argmin_x \{g(x) + \frac{1}{2\beta} \Vert x-y \Vert_2^2\}.
\end{equation}
\end{definition}
Using the proximal operator notation, the equation (\ref{eq10}) is equivalent to 
\begin{equation*} \label{prox1}
    \alpha^{k+1}= \text{prox}_{\beta^k g} (\alpha^k- \beta^k \nabla_\alpha f(\alpha^k) ).
\end{equation*}
This is easy to verify because 
\begin{align*}
    \alpha^{k+1} &= \text{prox}_{\beta g} (\alpha^k- \beta \nabla f_\alpha(\alpha^k) ) \\
    &=\argmin_\alpha \{\frac{1}{2\beta} \Vert \alpha-\alpha^k+\beta \nabla_\alpha f (\alpha^k) \Vert_2^2+g(\alpha)\} \\
    &= \argmin_\alpha \{L_f^\beta +g(\alpha)\}.
\end{align*}
\begin{remark} The proximal operator in (\ref{ProxDef}) is well-defined because the function $g(\alpha) $  is continuous and convex. Using the vec operator, we have  
\begin{align}
    \text{vec}([A,B,C,\alpha]_R) &= \sum_{r=1}^R \alpha_r \text{vec}(a_r \circ b_r \circ c_r ) \\
    &=\sum_{r=1}^R \alpha_r (c_r \otimes b_r \otimes a_r) \\
    &= M \alpha
\end{align}
where $M \in \mathbb{R}^{IJK \times R}$ is the matrix with columns $c_r \otimes b_r \otimes a_r$. Therefore we can rewrite the objective function $f$ as
\begin{equation} \label{Leastalpha}
    \frac{1}{2} \Vert \text{vec}(\mathcal{X}) - M \alpha \Vert_2^2
\end{equation}
\end{remark}  
It is easy to calculate the gradient of (\ref{Leastalpha}) with respect to $\alpha$:
\begin{equation}
    \nabla_\alpha f (A,B,C,\alpha) = M^T(M \alpha-\text{vec}(\mathcal{X}))
\end{equation}
This implies the Lipschitz continuity of the gradient of $f$ with respect to $\alpha$. The Lipschitz constant  is  $Q_\alpha =\Vert M^T M\Vert$ so we must have
\begin{align} \label{Lip}
    \Vert \nabla_\alpha f(\alpha_1)- \nabla_\alpha f(\alpha_2) \Vert \leq Q_\alpha \Vert \alpha_1 - \alpha_2 \Vert.
\end{align}

\section{Analysis of Convergence}
in this section, we study the global convergence of the proposed algorithm under mild assumptions. The Kurdyka-Lojasiewicz \cite{Kurd}, \cite{Loja}  property plays a key role in our analysis. We begin this section by stating the descent lemma.
\begin{lemma}{(Descent Lemma)}
Let $h:\mathbb{R}^n \to \mathbb{R}$ be continuously differentiable function, and $\nabla h$ is Lipschitz continuous with constand L, then for any $x, y \in \mathbb{R}^n$ we have
\begin{equation*}
    h(x)\leq h(y)+\langle x-y,\nabla h(y) \rangle +\frac{L}{2} \Vert x-y \Vert^2.
\end{equation*}
\end{lemma}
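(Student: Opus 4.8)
The plan is to prove the Descent Lemma by the standard route through the fundamental theorem of calculus applied to the scalar function $\phi(t) = h(y + t(x-y))$ for $t \in [0,1]$. First I would note that since $h$ is continuously differentiable on $\mathbb{R}^n$, the map $\phi$ is continuously differentiable on $[0,1]$ with $\phi'(t) = \langle x - y, \nabla h(y + t(x-y)) \rangle$ by the chain rule. Then $h(x) - h(y) = \phi(1) - \phi(0) = \int_0^1 \phi'(t)\, dt = \int_0^1 \langle x-y, \nabla h(y + t(x-y)) \rangle\, dt$.

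Next I would introduce the constant term $\langle x-y, \nabla h(y) \rangle$ by writing it as $\int_0^1 \langle x-y, \nabla h(y) \rangle\, dt$ and subtracting, so that
\begin{equation*}
h(x) - h(y) - \langle x-y, \nabla h(y) \rangle = \int_0^1 \langle x-y, \nabla h(y + t(x-y)) - \nabla h(y) \rangle\, dt.
\end{equation*}
Bounding the right-hand side by the Cauchy--Schwarz inequality inside the integral gives an upper bound of $\int_0^1 \Vert x - y \Vert \cdot \Vert \nabla h(y + t(x-y)) - \nabla h(y) \Vert\, dt$, and then the Lipschitz continuity of $\nabla h$ with constant $L$ bounds $\Vert \nabla h(y + t(x-y)) - \nabla h(y) \Vert \leq L \Vert t(x-y) \Vert = Lt\Vert x - y\Vert$. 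Substituting and evaluating $\int_0^1 t\, dt = \tfrac12$ yields $h(x) - h(y) - \langle x-y, \nabla h(y) \rangle \leq \tfrac{L}{2}\Vert x-y\Vert^2$, which is the claim after rearranging.

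There is no real obstacle here — the lemma is a textbook result and every step is routine; the only points requiring a modicum of care are justifying differentiation under the integral / the chain rule for $\phi$ (immediate from $C^1$ smoothness) and keeping the Cauchy--Schwarz bound inside the integral rather than pulling a norm out prematurely. I would present it in the three displayed steps above without further elaboration.
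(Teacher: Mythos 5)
Your proof is correct: the standard argument via the fundamental theorem of calculus applied to $\phi(t)=h(y+t(x-y))$, adding and subtracting $\langle x-y,\nabla h(y)\rangle$ under the integral, then Cauchy--Schwarz plus the Lipschitz bound and $\int_0^1 t\,dt=\tfrac12$. The paper itself states the Descent Lemma without proof, treating it as a known textbook fact, so there is nothing to compare against; your write-up supplies exactly the standard justification and is complete.
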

Next lemma provides the theoretical estimate for the decrease in the objective function after a single update $\alpha$.
\begin{lemma} \label{lem2}
Suppose that $\alpha^{k+1}$ is obtained by the equation (\ref{prox1}) and $0<\beta^k <1/Q_\alpha^k$, where $Q_\alpha^k$'s are defined in (\ref{Lip}), then there is a constant $N^k>0$ such that
\begin{equation}
    \Psi(\alpha^{k} ) - \Psi(\alpha^{k+1}) \geq N^k \Vert \alpha^{k+1} -\alpha^k \Vert^2.
\end{equation}
\end{lemma}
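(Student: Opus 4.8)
The plan is to chain two standard ingredients: the Descent Lemma applied to $f$ as a function of the $\alpha$-block alone, and the minimizing property of $\alpha^{k+1}$ for $L_f^{\beta^k}(\cdot)+g(\cdot)$. Throughout I would hold $A=A^{k+1}$, $B=B^{k+1}$, $C=C^{k+1}$ fixed, using the convention from the end of Section~3; since the Tikhonov terms $\frac{\lambda}{2}(\Vert A\Vert_F^2+\Vert B\Vert_F^2+\Vert C\Vert_F^2)$ do not depend on $\alpha$, they cancel in the difference, so that
\begin{equation*}
\Psi(\alpha^k)-\Psi(\alpha^{k+1}) = \bigl(f(\alpha^k)+g(\alpha^k)\bigr) - \bigl(f(\alpha^{k+1})+g(\alpha^{k+1})\bigr),
\end{equation*}
and it suffices to lower bound the right-hand side by a positive multiple of $\Vert\alpha^{k+1}-\alpha^k\Vert^2$.

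First I would use the Remark: with the other blocks fixed, $f(\alpha)=\frac{1}{2}\Vert\text{vec}(\mathcal{X})-M\alpha\Vert_2^2$ is a convex quadratic whose gradient is Lipschitz with constant $Q_\alpha^k=\Vert M^TM\Vert$, exactly as in (\ref{Lip}); so the Descent Lemma gives
\begin{equation*}
f(\alpha^{k+1}) \le f(\alpha^k) + \langle \alpha^{k+1}-\alpha^k,\nabla_\alpha f(\alpha^k)\rangle + \frac{Q_\alpha^k}{2}\Vert\alpha^{k+1}-\alpha^k\Vert^2.
\end{equation*}
Next, since by (\ref{eq10})--(\ref{prox1}) the vector $\alpha^{k+1}$ minimizes $\alpha\mapsto L_f^{\beta^k}(\alpha)+g(\alpha)$, comparing its value at $\alpha^{k+1}$ with its value at $\alpha^k$ (where $L_f^{\beta^k}(\alpha^k)=0$) yields
\begin{equation*}
\langle \alpha^{k+1}-\alpha^k,\nabla_\alpha f(\alpha^k)\rangle + \frac{1}{2\beta^k}\Vert\alpha^{k+1}-\alpha^k\Vert^2 + g(\alpha^{k+1}) \le g(\alpha^k).
\end{equation*}

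Adding these two displays, the inner-product terms cancel and I obtain
\begin{equation*}
f(\alpha^{k+1})+g(\alpha^{k+1}) \le f(\alpha^k)+g(\alpha^k) + \Bigl(\frac{Q_\alpha^k}{2}-\frac{1}{2\beta^k}\Bigr)\Vert\alpha^{k+1}-\alpha^k\Vert^2,
\end{equation*}
which, together with the cancellation of the Tikhonov terms noted above, is precisely $\Psi(\alpha^k)-\Psi(\alpha^{k+1}) \ge N^k\Vert\alpha^{k+1}-\alpha^k\Vert^2$ with $N^k=\frac{1}{2\beta^k}-\frac{Q_\alpha^k}{2}$, and the hypothesis $0<\beta^k<1/Q_\alpha^k$ forces $N^k>0$.

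There is no genuine obstacle here — this is the classical sufficient-decrease estimate for a proximal-gradient step — so the only things needing care are the bookkeeping of which variables are frozen and the verification that the Descent Lemma really applies to the $\alpha$-slice of $f$, which the Remark supplies. It is worth recording for the later convergence analysis that if the step sizes are kept away from the boundary, say $\beta^k\le c/Q_\alpha^k$ with a fixed $c\in(0,1)$, then $N^k\ge \frac{1-c}{2c}\,Q_\alpha^k$, so the decrease constant is uniformly positive as long as the $Q_\alpha^k$ stay bounded below along the iterates.
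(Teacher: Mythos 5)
Your proposal is correct and follows essentially the same route as the paper's proof: both combine the Descent Lemma for the $\alpha$-slice of $f$ with the inequality $L_f^{\beta^k}(\alpha^{k+1})+g(\alpha^{k+1})\le g(\alpha^k)$ coming from the minimizing property of the proximal step, and arrive at $N^k=\frac{1-\beta^k Q_\alpha^k}{2\beta^k}>0$. Your explicit remark that the Tikhonov terms cancel so that the $\Psi$-difference reduces to the $(f+g)$-difference is a point the paper leaves implicit, but it does not change the argument.
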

\begin{proof}
Recall that 
\begin{equation*}
    L_f^{\beta^k}(\alpha)=\langle \alpha-\alpha^k, \nabla f_\alpha(\alpha^k) \rangle + \frac{1}{2\beta^k} \Vert \alpha - \alpha^k \Vert^2,
\end{equation*}
and $\alpha^{k+1}$ is obtained by the equation 
\begin{equation*}
    \alpha^{k+1} =  \argmin_\alpha \{ L_f^{\beta^k}(\alpha) + g(\alpha) \} , 
\end{equation*}
therefore we must have 
\begin{equation} \label{lem2eq}
    L_f^{\beta^k}(\alpha^{k+1}) +g(\alpha^{k+1}) \leq g(\alpha^k).
\end{equation}
Since $\nabla_\alpha f$ is Lipschitz continuous with constant $Q_\alpha^k$, by the descent lemma we have
\begin{align*}
    f(\alpha^{k+1}) &\leq f(\alpha^k) + \langle \alpha^{k+1}-\alpha^k,\nabla_\alpha f(\alpha^k)\rangle \\
    &+\frac{ Q_\alpha^k }{2} \Vert \alpha^{k+1} - \alpha^k \Vert^2 \nonumber
\end{align*}
with (\ref{lem2eq}), the above inequality implies
\begin{align*}
    f(\alpha^{k+1})+g(\alpha^{k+1}) &\leq f(\alpha^k) +g(\alpha^k)\\ &- \left ( \frac{ 1- \beta^k  Q_\alpha^k  }{2\beta^k} \right) \Vert \alpha^{k+1} -\alpha^k \Vert^2 
\end{align*}
setting 
\begin{equation*}
    N^k=\frac{ 1- \beta  Q_\alpha^k  }{2\beta}>0
\end{equation*}
proves the lemma.
  %  but this means there is a positive constant $N_k$ we have
   % \begin{equation}
      %  \Psi(\alpha^{k} ) - \Psi(\alpha^{k+1}) \geq N_k \Vert \alpha^{k+1} -\alpha^k \Vert^2 
    %\end{equation}
\end{proof}
\begin{remark}
Suppose that $Q_\alpha^k$'s are bounded from above by the constant $Q_\alpha$ in the previous lemma, then for fixed step-size $\beta$ where
\begin{equation*}
    0<\beta <1/Q_\alpha 
\end{equation*}
we have 
\begin{equation}
    \Psi(\alpha^{k} ) - \Psi(\alpha^{k+1}) \geq \left ( \frac{ 1- \beta  Q_\alpha  }{2\beta} \right) \Vert \alpha^{k+1} -\alpha^k \Vert^2
\end{equation}
for each $k=1,2,\hdots$ .
%This can be imposed on our algorithm by taking $Q_\alpha^k$ to be the minimum of $Q_\alpha^k$ and a one at each iteration.
\end{remark}
\begin{definition} \cite{b14}
A differentiable function $h:\mathbb{R}^n \to \mathbb{R}$ is called strongly convex if there is a constant $\mu >0$ such that 
\begin{equation*}
    h(x)-h(y) \geq  \langle \nabla h(y), x-y \rangle + \frac{\mu}{2} \Vert x-y \Vert^2
\end{equation*}
for any $x,y \in \mathbb{R}^n$.
\end{definition}
\begin{lemma}\label{lem3}
Suppose that $A^{k+1}$ is obtained by equation (\ref{eq7}), then we have 
\begin{equation*}
    \Psi(A^k)- \Psi(A^{k+1})  \geq \frac{\lambda}{2}\Vert A^k -A^{k+1} \Vert_F^2
\end{equation*}
\end{lemma}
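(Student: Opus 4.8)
The plan is to exploit the fact that the $A$-update in (\ref{eq7}) minimizes a strongly convex function, and that $\Psi$, viewed as a function of the $A$-block alone, differs from that function only by an additive constant. Write $\phi(A) := f(A,B^k,C^k,\alpha^k) + \frac{\lambda}{2}\Vert A \Vert_F^2$ for the objective appearing in (\ref{eq7}). With $B^k, C^k, \alpha^k$ held fixed (which is the convention under which $\Psi(A^k)$, $\Psi(A^{k+1})$ are read), the remaining terms of $\Psi$, namely $\frac{\lambda}{2}(\Vert B^k \Vert_F^2 + \Vert C^k \Vert_F^2) + g(\alpha^k)$, do not depend on $A$, so $\Psi(A) = \phi(A) + c^k$ for a constant $c^k$. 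Hence it suffices to prove $\phi(A^k) - \phi(A^{k+1}) \geq \frac{\lambda}{2}\Vert A^k - A^{k+1}\Vert_F^2$.

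First I would show that $\phi$ is strongly convex with modulus $\lambda$. Vectorizing the fitting term as in Section 4, $f(A,B^k,C^k,\alpha^k)$ equals $\frac{1}{2}\Vert \text{vec}(\mathcal{X}_{(1)}) - \left(((C^k \odot B^k)\,\text{diag}(\alpha^k))\otimes I\right)\text{vec}(A)\Vert_2^2$, which is a convex quadratic in $\text{vec}(A)$ whose Hessian is positive semidefinite; adding $\frac{\lambda}{2}\Vert A\Vert_F^2 = \frac{\lambda}{2}\Vert \text{vec}(A)\Vert_2^2$ shifts the Hessian by $\lambda I$. Thus $\phi$ satisfies the strong convexity inequality of the Definition immediately preceding this lemma with $\mu = \lambda$.

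Next, since $\phi$ is differentiable and $A^{k+1}$ is its unconstrained minimizer, the first-order optimality condition yields $\nabla \phi(A^{k+1}) = 0$ (equivalently, the normal equation $A^{k+1}(E^k(E^k)^T + \lambda I) = \mathcal{X}_{(1)}(E^k)^T$ of Section 4). Applying the strong convexity inequality with $x = A^k$ and $y = A^{k+1}$,
\begin{equation*}
\phi(A^k) - \phi(A^{k+1}) \geq \langle \nabla \phi(A^{k+1}),\, A^k - A^{k+1}\rangle + \frac{\lambda}{2}\Vert A^k - A^{k+1}\Vert_F^2 = \frac{\lambda}{2}\Vert A^k - A^{k+1}\Vert_F^2 ,
\end{equation*}
and combining with $\Psi(A) = \phi(A) + c^k$ gives the claim.

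There is no serious obstacle here; the only point requiring a little care is confirming that the quadratic fitting term contributes a positive semidefinite (not merely some positive definite) Hessian, so that the Tikhonov term alone accounts for the modulus $\lambda$ — this is immediate from the linear least-squares structure of (\ref{eq3}). The same argument, with the matricizations $\mathcal{X}_{(2)}$, $\mathcal{X}_{(3)}$ and the factors $F^k$, $G^k$ in place of $E^k$, will yield the analogous one-step descent estimates for the $B$- and $C$-updates in (\ref{eq8})--(\ref{eq9}).
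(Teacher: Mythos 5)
Your proposal is correct and follows essentially the same route as the paper: identify the $A$-subproblem objective as strongly convex with modulus $\lambda$, invoke the first-order optimality condition at $A^{k+1}$ so the gradient term in the strong-convexity inequality vanishes, and transfer the resulting decrease to $\Psi$ since the remaining terms are constant in $A$. The only difference is that you spell out the positive-semidefinite quadratic structure justifying the modulus $\lambda$, which the paper asserts without detail.
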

\begin{proof}
Note that the objective functions in (\ref{eq7}) is strongly convex with parameter $\lambda$ and by the first-order optimality condition we must have 
\begin{equation*}
    \nabla_A f(A^{k+1}) + \lambda A^{k+1}=0
\end{equation*}
now the strong convexity of $f+\Vert . \Vert_F^2$ yields 
\begin{align*}
    f(A^{k})+ \frac{\lambda}{2}\Vert A^k \Vert_F^2 &-f(A^{k+1})-\frac{\lambda}{2}\Vert A^{k+1}) \Vert_F^2 \\ &\geq \frac{\lambda}{2} \Vert A^k -A^{k+1} \Vert^2
\end{align*}
which implies 
\begin{equation*}
    \Psi( A^{k})-\Psi (A^{k+1})  \geq \frac{\lambda}{2} \Vert A^k -A^{k+1} \Vert^2.
\end{equation*}
This proves the lemma.
\end{proof}
\begin{remark}
Similar results hold for the blocks $B$ and $C$, if they are updated by equations (\ref{eq8}) and (\ref{eq9}). In particular, we have that 
\end{remark}

The next theorem guarantees that the value of $\Psi$ decreases monotonically at each iteration. This shows that the sequence $\{ \omega^k \}$ generated by scheme (\ref{eq7}), (\ref{eq8}), (\ref{eq9}) and (\ref{eq10}) is monotonically decreasing in value,
\begin{thm}{(Sufficient decrease property)} 
Let $\Psi$ represent the objective function in (ref) and $\omega^k = (A^{k},B^{k}, C^k, \alpha^k )$, then we have 
\begin{equation}
    \Psi(\omega^k)- \Psi(\omega^{k+1}) \geq \rho \Vert \omega^k -\omega^{k+1} \Vert^2
\end{equation}
for some positive constant $\rho$. In addition we have 
\begin{equation}
    \sum_{k=0}^\infty \Vert \omega^k -\omega^{k+1} \Vert^2  < \infty
\end{equation}
\end{thm}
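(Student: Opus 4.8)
The plan is to chain together the four per-block decrease estimates already established. The theorem is really an aggregation result: Lemma \ref{lem3} (and the Remark following it) gives the decrease for the $A$, $B$, and $C$ updates with constant $\lambda/2$, while Lemma \ref{lem2} gives the decrease for the $\alpha$ update with constant $N^k = (1-\beta^k Q_\alpha^k)/(2\beta^k)$. So first I would write out the telescoping chain
\begin{align*}
\Psi(\omega^k) - \Psi(\omega^{k+1}) &= \bigl(\Psi(A^k,B^k,C^k,\alpha^k) - \Psi(A^{k+1},B^k,C^k,\alpha^k)\bigr) \\
&\quad + \bigl(\Psi(A^{k+1},B^k,C^k,\alpha^k) - \Psi(A^{k+1},B^{k+1},C^k,\alpha^k)\bigr) \\
&\quad + \bigl(\Psi(A^{k+1},B^{k+1},C^k,\alpha^k) - \Psi(A^{k+1},B^{k+1},C^{k+1},\alpha^k)\bigr) \\
&\quad + \bigl(\Psi(A^{k+1},B^{k+1},C^{k+1},\alpha^k) - \Psi(A^{k+1},B^{k+1},C^{k+1},\alpha^{k+1})\bigr),
\end{align*}
and bound each of the four parenthesized differences below by the corresponding lemma, obtaining $\frac{\lambda}{2}\Vert A^k - A^{k+1}\Vert^2$, $\frac{\lambda}{2}\Vert B^k - B^{k+1}\Vert^2$, $\frac{\lambda}{2}\Vert C^k - C^{k+1}\Vert^2$, and $N^k\Vert \alpha^k - \alpha^{k+1}\Vert^2$ respectively. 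Setting $\rho = \min\{\lambda/2,\ \inf_k N^k\}$ and using $\Vert \omega^k - \omega^{k+1}\Vert^2 = \Vert A^k - A^{k+1}\Vert_F^2 + \Vert B^k - B^{k+1}\Vert_F^2 + \Vert C^k - C^{k+1}\Vert_F^2 + \Vert \alpha^k - \alpha^{k+1}\Vert^2$ yields the first inequality.

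For the summability statement, I would sum the sufficient-decrease inequality over $k = 0, 1, \dots, K$ to get
\begin{equation*}
\rho \sum_{k=0}^{K} \Vert \omega^k - \omega^{k+1}\Vert^2 \leq \sum_{k=0}^K \bigl(\Psi(\omega^k) - \Psi(\omega^{k+1})\bigr) = \Psi(\omega^0) - \Psi(\omega^{K+1}) \leq \Psi(\omega^0),
\end{equation*}
where the last step uses $\Psi \geq 0$ (it maps into $\mathbb{R}^+$, being a sum of a squared Frobenius norm, squared norms, and a norm term). Letting $K \to \infty$ gives $\sum_{k=0}^\infty \Vert \omega^k - \omega^{k+1}\Vert^2 \leq \Psi(\omega^0)/\rho < \infty$. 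As a corollary one also gets $\Vert \omega^k - \omega^{k+1}\Vert \to 0$.

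The main obstacle is making $\rho$ a genuinely positive constant rather than a $k$-dependent quantity: this requires $\inf_k N^k > 0$, i.e. a uniform bound $Q_\alpha^k \leq Q_\alpha$ on the Lipschitz constants together with a step-size rule such as $\beta^k \in [\underline{\beta}, \bar\beta]$ with $0 < \underline{\beta} \leq \bar\beta < 1/Q_\alpha$, exactly as flagged in the Remark after Lemma \ref{lem2}. I would therefore state this as a standing assumption (the factor matrices and hence the columns $c_r \otimes b_r \otimes a_r$ of $M$ stay bounded along the iterates — which itself follows once $\Psi(\omega^k) \leq \Psi(\omega^0)$ controls $\Vert A^k\Vert_F, \Vert B^k\Vert_F, \Vert C^k\Vert_F$ through the Tikhonov terms), so that $Q_\alpha^k = \Vert (M^k)^T M^k\Vert$ is uniformly bounded and a fixed admissible step size can be chosen. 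A minor secondary point is that Lemma \ref{lem3} as stated compares $\Psi$ at $A^k$ versus $A^{k+1}$ with the other blocks held at their current values; one must check the bookkeeping that these "current values" are the ones appearing in the telescoping chain above, which they are by the order of updates in (\ref{eq7})–(\ref{eq10}).
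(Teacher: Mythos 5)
Your proof is correct and takes essentially the same route as the paper: the blockwise decrease estimates of Lemmas \ref{lem2} and \ref{lem3} are summed (telescoping over the Gauss--Seidel order of updates), $\rho=\min\{\lambda/2,\,N_\alpha\}$ is taken, and then the inequality is summed over $k$ and combined with the boundedness of $\Psi$ from below to get summability. Your added care about making $N^k$ uniformly positive (uniform bound on $Q_\alpha^k$ plus a fixed admissible step size, justified by boundedness of the iterates) is handled in the paper only implicitly via the remark following Lemma \ref{lem2}, so it strengthens rather than changes the argument.
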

\begin{proof}
By lemmas \ref{lem2} and \ref{lem3} we have 
\begin{align*}
   \Psi(\omega^k) - \Psi(\omega^{k+1}) &\geq \frac{\lambda}{2} (\Vert A^k-A^{k+1}\Vert^2 +\Vert B^k-B^{k+1} \Vert^2 \\
   &+\Vert C^k-C^{k+1} \Vert^2)+ N_\alpha \Vert \alpha^k -\alpha^{k+1}\Vert^2
\end{align*}
setting $\rho = \min \{\lambda/2, N_\alpha\}$ gives the first result. This shows that the sequence $\{\Psi(\omega^k)\}$ generated by our algorithm is decreasing. The monotonicity of $\{\Psi(\omega^k)\}$ with the fact that $\Psi$ is bounded from below, implies $\Psi(\omega^k) \to \inf \Psi =\underline{\Psi}$ as $k\to \infty$,  next let $n>2$ be a positive integer, then
\begin{align*}
    \sum_{k=0}^{n-1} \Vert \omega^k -\omega^{k+1} \Vert^2 &\leq \frac{1}{\rho} \sum_{k=0}^{n-1} \left( \Psi(\omega^k)-\Psi(\omega^{k+1})\right)\\
    &= \frac{1}{\rho} (\Psi(\omega^0)-\Psi(\omega^n))
\end{align*}
letting $n\to \infty$ proves the last statement.
\end{proof}
\begin{remark} \label{RemBound}
The sequence $\{ \omega^k \}$ generated by the scheme (\ref{eq8})-(\ref{eq10}) is bounded. The reason comes from the fact that unboundedness of $\{\omega^k \}$ occurs when at least one of the blocks $A, B, C$ or $\alpha$ gets unbounded. This never happens due to the regularization terms in the objective function $\Psi$ and the fact that $\Psi( \omega^k)$ is non-increasing.
\end{remark}
\begin{thm}
Let $\{ \omega^k \}_{k\in \mathbb{N}}$ be the sequence generated by our algorithm, then there exists a positive constant $\rho >0$ such that for any $k \in \mathbb{N}$ there is a vector $\eta^{k+1} \in \partial \Psi(\omega^{k+1})$ such that
\begin{equation*}
    \Vert \eta^{k+1} \Vert \leq \rho \Vert \omega^{k}-\omega^{k+1} \Vert
\end{equation*}
\end{thm}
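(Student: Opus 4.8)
The plan is to construct the subgradient $\eta^{k+1}$ block by block out of the first-order optimality conditions of the four subproblems (\ref{eq7})--(\ref{eq10}), and then to bound each block by $\|\omega^{k}-\omega^{k+1}\|$ using Lipschitz continuity of $\nabla f$ on a bounded set together with the boundedness of $\{\omega^k\}$ from Remark \ref{RemBound}. This is the standard ``relative error'' estimate which, combined with the sufficient decrease property of the previous theorem and the Kurdyka--{\L}ojasiewicz property, will yield global convergence.

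First I would record the form of $\partial\Psi$. Since $f$ and the Tikhonov terms $\tfrac{\lambda}{2}(\|A\|_F^2+\|B\|_F^2+\|C\|_F^2)$ are continuously differentiable and $g$ depends only on $\alpha$, the subdifferential splits as
\[ \partial\Psi(\omega)=\bigl(\nabla_A f(\omega)+\lambda A,\ \nabla_B f(\omega)+\lambda B,\ \nabla_C f(\omega)+\lambda C,\ \nabla_\alpha f(\omega)+\partial g(\alpha)\bigr), \]
so it suffices to exhibit, for the point $\omega^{k+1}=(A^{k+1},B^{k+1},C^{k+1},\alpha^{k+1})$, one admissible vector in each block with the required bound. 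Next I would use the optimality conditions. From (\ref{eq7})--(\ref{eq9}) (as already observed in the proof of Lemma \ref{lem3}),
\[ \nabla_A f(A^{k+1},B^{k},C^{k},\alpha^{k})+\lambda A^{k+1}=0, \]
\[ \nabla_B f(A^{k+1},B^{k+1},C^{k},\alpha^{k})+\lambda B^{k+1}=0, \]
\[ \nabla_C f(A^{k+1},B^{k+1},C^{k+1},\alpha^{k})+\lambda C^{k+1}=0, \]
and from the proximal update (\ref{eq10}),
\[ 0\in\nabla_\alpha f(A^{k+1},B^{k+1},C^{k+1},\alpha^{k})+\tfrac{1}{\beta^k}(\alpha^{k+1}-\alpha^{k})+\partial g(\alpha^{k+1}). \]

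I then set $\eta_A^{k+1}:=\nabla_A f(\omega^{k+1})+\lambda A^{k+1}$, and likewise $\eta_B^{k+1},\eta_C^{k+1}$, and $\eta_\alpha^{k+1}:=\nabla_\alpha f(\omega^{k+1})+s^{k+1}$ with $s^{k+1}:=-\nabla_\alpha f(A^{k+1},B^{k+1},C^{k+1},\alpha^{k})-\tfrac{1}{\beta^k}(\alpha^{k+1}-\alpha^{k})\in\partial g(\alpha^{k+1})$. Substituting the optimality conditions, each block becomes a difference of (partial) gradients of $f$ evaluated at $\omega^{k+1}$ and at a point that coincides with $\omega^{k+1}$ in the already-updated coordinates and with $\omega^{k}$ in the remaining ones: e.g.\ $\eta_A^{k+1}=\nabla_A f(A^{k+1},B^{k+1},C^{k+1},\alpha^{k+1})-\nabla_A f(A^{k+1},B^{k},C^{k},\alpha^{k})$, and similarly $\eta_B^{k+1}$, $\eta_C^{k+1}$ (differing only in the coordinates not yet updated), while $\eta_\alpha^{k+1}=\nabla_\alpha f(A^{k+1},B^{k+1},C^{k+1},\alpha^{k+1})-\nabla_\alpha f(A^{k+1},B^{k+1},C^{k+1},\alpha^{k})-\tfrac{1}{\beta^k}(\alpha^{k+1}-\alpha^{k})$. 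In each case the two evaluation points are at distance at most $\|\omega^{k}-\omega^{k+1}\|$.

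Finally I would apply Lipschitz continuity. Because $f$ is a polynomial and $\{\omega^k\}$ stays in a bounded set $\mathcal{B}$ (Remark \ref{RemBound}), $\nabla f$ is Lipschitz on $\mathcal{B}$ with some constant $L$; hence $\|\eta_A^{k+1}\|,\|\eta_B^{k+1}\|,\|\eta_C^{k+1}\|\le L\|\omega^{k}-\omega^{k+1}\|$ and $\|\eta_\alpha^{k+1}\|\le\bigl(L+\tfrac{1}{\beta^k}\bigr)\|\alpha^{k}-\alpha^{k+1}\|$. Taking $\eta^{k+1}=(\eta_A^{k+1},\eta_B^{k+1},\eta_C^{k+1},\eta_\alpha^{k+1})\in\partial\Psi(\omega^{k+1})$ and adding these estimates (using that the stacked-vector norm and the sum of the block norms are equivalent up to a fixed factor) gives $\|\eta^{k+1}\|\le\rho\|\omega^{k}-\omega^{k+1}\|$, with $\rho$ a fixed multiple of $3L+\sup_k(L+1/\beta^k)$. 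The main obstacle is precisely this uniformity: one must ensure that a single Lipschitz constant serves all iterations and all block evaluations — which is why the a priori boundedness of the iterates is indispensable — and that $\sup_k 1/\beta^k<\infty$, i.e.\ the step sizes stay bounded away from $0$; the latter follows from the standing assumption (already used in the remark after Lemma \ref{lem2}) that the constants $Q_\alpha^k$ are uniformly bounded above, under which one may keep $\beta^k$ constant in $(0,1/Q_\alpha)$.
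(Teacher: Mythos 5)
Your proposal is correct and follows essentially the same route as the paper: construct $\eta^{k+1}$ blockwise from the first-order optimality conditions of (\ref{eq7})--(\ref{eq10}), identify each block as a difference of partial gradients of $f$ (plus the $\tfrac{1}{\beta^k}(\alpha^{k+1}-\alpha^k)$ term for the $\alpha$-block), and bound it via Lipschitz continuity of $\nabla f$ on the bounded set containing $\{\omega^k\}$ from Remark \ref{RemBound}. In fact you are somewhat more explicit than the paper about where the $\alpha$-block constant comes from (the paper's $P_4$ appears without derivation) and about the need for $\sup_k 1/\beta^k<\infty$, which is a welcome clarification rather than a departure.
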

\begin{proof}
Let $k$ be a positive integer. By equations (\ref{eq7}), (\ref{eq8}), (\ref{eq9}) and the first order optimality condition we have 
\begin{equation*}
    \nabla_A f(A^{k+1},B^k,C^k,\alpha^k) + \lambda A^{k+1}=0,
    \end{equation*}
    \begin{equation*}
    \nabla_B f(A^{k+1}, B^{k+1}, C^k, \alpha^k) +\lambda B^{k+1}=0 ,
\end{equation*}
and
\begin{equation*}
    \nabla_C f(A^{k+1}, B^{k+1}, C^{k+1}, \alpha^k)+\lambda C^{k+1} =0
\end{equation*}
define 
\begin{equation*}
    \eta_1^{k+1} = \nabla_A f(\omega^{k+1}) - \nabla_A f(A^{k+1}, B^{k}, C^k, \alpha^{k} )
\end{equation*}
then $\eta_1^{k+1} = \nabla_A \Psi(\omega^{k+1})$. similarly we can define vectors $\eta_2^{k+1}$, $\eta_3^{k+1}$. Next, by equation (\ref{eq10}), we have that
\begin{equation}
\alpha^{k+1} = \argmin_{\alpha} \{ L_f^{\beta^k}(A^{k+1},B^{k+1},C^{k+1},\alpha) +g(\alpha)\}
\end{equation}
hence by the optimality condition, there exists $u \in \partial g(\alpha^{k+1})$ such that 
\begin{equation*}
    \nabla_\alpha f (\alpha^{k}) + \frac{1}{\beta^k} (\alpha^{k+1}-\alpha^k) +u^{k+1} =0
\end{equation*}
define 
\begin{align*}
  \eta_4^{k+1}&=  \nabla_\alpha f(\omega^{k+1})
  -\nabla_\alpha f(\alpha^{k}) +\frac{1}{\beta^k} (\alpha^k -\alpha^{k+1})\\
  &= \nabla_\alpha f(\omega^{k+1}) +u^{k+1}
\end{align*}
so $\eta_4^{k+1} \in \partial \Psi_\alpha (\omega^{k+1})$. From these facts we have that 
\begin{equation*}
    \eta^{k+1}=(\eta_1^{k+1}, \eta_2^{k+1}, \eta_3^{k+1},  \eta_4^{k+1} ) \in \partial \Psi (\omega^{k+1})
\end{equation*}
We now estimate the norm of $\eta^{k+1}$. First note that by \ref{RemBound}, $\{\omega^k \}$ is bounded and the objective function (without the $\ell_1$ regularization term) is twice continuously differentiable, therefore as a consequence of mean value theorem, $\nabla f$ must be Lipschitz continuous. Hence there must exist a constant $P_1$ such that
\begin{align*}
    \Vert \eta_1^{k+1} \Vert &=  \Vert \nabla_A f(\omega^{k+1}) - \nabla_A f(A^{k+1}, B^{k}, C^k, \alpha^{k} ) \Vert \\
    &\leq P_1 \Vert \omega^k - \omega^{k+1} \Vert,
\end{align*}
similarly, constants $P_2$ and $P_3$ exist such that 
\begin{equation*}
   \Vert \eta_2^{k+1} \Vert\leq P_2 \Vert \omega^k - \omega^{k+1} \Vert,
\end{equation*}
and \begin{equation*}
    \Vert \eta_3^{k+1} \Vert\leq P_3 \Vert \omega^k - \omega^{k+1} \Vert.
\end{equation*}
setting $\nu = \max \{P_1, P_2, P_3, P_4\}$, gives us the result.
\end{proof}

Let $f:\mathbb{R}^n \to \mathbb{R}$ be a continuous function. The function $f$ is said to have Kurdyka-Lojasiewicz (KL) property at point $\hat{x} \in \partial f$ if there exists $\theta \in [0,1)$ such that 
\begin{equation*}
    \frac{\vert f(x)-f(\hat{x}) \vert^\theta}{\text{dist}(0,\partial f(x))}
\end{equation*}
is bounded around $\hat{x}$ \cite{b10}. A very rich class of functions satisfying the KL property is the semi-algebraic functions. These are functions where their graphs can be expressed as an algebraic set, that is 
\begin{equation*}
    \text{Graph}(f)= \bigcup_{i=1}^p \bigcap_{j=1}^q \{ x\in \mathbb{R}^n: P_{ij}=0, Q_{ij}(x) >0 \}
\end{equation*}
where $P_{ij}$'s and $Q_{ij}$'s are polynomial functions and the graph of $f$ is defined by 
\begin{equation*}
    \text{Graph}(f) =\{ (x,y) \in \mathbb{R}^{n}\times \mathbb{R} : f(x)=y \}.
\end{equation*}
Note that the univariate function  $g(x)=\vert x\vert $  is semialgebraic because 
\begin{align*}
    \text{Graph} (g) &=\{(x,y): \vert x\vert =y\}
    =\{ (x,y) : y-x=0, x>0 \} \\&\cup \{(x,y) : y+x=0, -x>0 \}
\end{align*}
The class of semi algebraic functions are closed under addition and composition \cite{b13}. Hence The objective function in (\ref{ObjEqu}) is semialgebraic therefore it satisfies KL property.
\begin{thm}
Suppose that $\{ \omega^k \}_{k \in \mathbb{N}}$ is the sequence generated by our algorithm, then $\{ \omega^k \}_{k \in \mathbb{N}}$ converges to the critical point of $\Psi$.
\end{thm}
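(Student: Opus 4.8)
The plan is to apply the by-now standard recipe for convergence of descent schemes governed by a Kurdyka--{\L}ojasiewicz function, in the spirit of the PALM analysis \cite{PALM}. All three ingredients it needs are already in place: the \emph{sufficient decrease} inequality $\Psi(\omega^k)-\Psi(\omega^{k+1})\ge a\,\Vert\omega^k-\omega^{k+1}\Vert^2$ (with $a=\rho$ from the sufficient decrease theorem), the \emph{relative error} bound $\Vert\eta^{k+1}\Vert\le b\,\Vert\omega^k-\omega^{k+1}\Vert$ for some $\eta^{k+1}\in\partial\Psi(\omega^{k+1})$ (the constant $b$ coming from the preceding theorem), the boundedness of $\{\omega^k\}$ from Remark \ref{RemBound}, and the fact that $\Psi$ is semialgebraic and hence a KL function. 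What remains is to chain these facts together in the usual way.

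First I would show that the limit point set $\Omega$ of $\{\omega^k\}$ is nonempty, compact, contained in $\mathrm{crit}\,\Psi$, and that $\Psi$ is constant on it. Boundedness yields a subsequence $\omega^{k_j}\to\bar\omega$. The sufficient decrease inequality shows $\{\Psi(\omega^k)\}$ is nonincreasing; since $\Psi\ge 0$ it converges to some $\underline{\Psi}\ge 0$, and summing the inequality gives $\sum_k\Vert\omega^k-\omega^{k+1}\Vert^2<\infty$, hence $\Vert\omega^k-\omega^{k+1}\Vert\to 0$. Since $\Psi$ is continuous (its fitting and Tikhonov parts are smooth and $g=\gamma\Vert\cdot\Vert_1$ is continuous), $\Psi(\omega^{k_j})\to\Psi(\bar\omega)$, so $\Psi\equiv\underline{\Psi}$ on $\Omega$. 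The relative error bound gives $\mathrm{dist}(0,\partial\Psi(\omega^{k+1}))\le b\,\Vert\omega^k-\omega^{k+1}\Vert\to 0$; combining this with $\omega^{k_j}\to\bar\omega$, $\Psi(\omega^{k_j})\to\Psi(\bar\omega)$, and the closedness of the graph of $\partial\Psi$, we conclude $0\in\partial\Psi(\bar\omega)$. Thus every limit point is a critical point of $\Psi$.

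Next I would invoke the uniformized KL property on the compact set $\Omega$: there are $\varepsilon>0$, a neighborhood $U$ of $\Omega$, and a concave desingularizing function $\varphi$ with $\varphi'(s)>0$ such that $\varphi'\!\left(\Psi(\omega)-\underline{\Psi}\right)\mathrm{dist}(0,\partial\Psi(\omega))\ge 1$ whenever $\omega\in U$ and $\underline{\Psi}<\Psi(\omega)<\underline{\Psi}+\varepsilon$. (If $\Psi(\omega^{k_0})=\underline{\Psi}$ for some finite $k_0$, the sufficient decrease inequality forces the iterates to be eventually constant and the statement is immediate, so we may assume $\Psi(\omega^k)>\underline{\Psi}$ for all $k$.) For $k$ large enough that $\omega^k\in U$ and $\Psi(\omega^k)<\underline{\Psi}+\varepsilon$, concavity of $\varphi$ gives
\begin{align*}
\varphi\!\left(\Psi(\omega^k)-\underline{\Psi}\right)-\varphi\!\left(\Psi(\omega^{k+1})-\underline{\Psi}\right)
&\ge \varphi'\!\left(\Psi(\omega^k)-\underline{\Psi}\right)\bigl(\Psi(\omega^k)-\Psi(\omega^{k+1})\bigr)\\
&\ge \frac{\Psi(\omega^k)-\Psi(\omega^{k+1})}{\mathrm{dist}(0,\partial\Psi(\omega^k))}.
\end{align*}
Substituting the sufficient decrease bound $\Psi(\omega^k)-\Psi(\omega^{k+1})\ge a\,\Vert\omega^k-\omega^{k+1}\Vert^2$ into the numerator and the relative error bound $\mathrm{dist}(0,\partial\Psi(\omega^k))\le b\,\Vert\omega^{k-1}-\omega^k\Vert$ into the denominator, and then using $2\sqrt{xy}\le x+y$, produces an inequality of the form $2\Vert\omega^k-\omega^{k+1}\Vert\le \Vert\omega^{k-1}-\omega^k\Vert+\frac{b}{a}\bigl(\varphi(\Psi(\omega^k)-\underline{\Psi})-\varphi(\Psi(\omega^{k+1})-\underline{\Psi})\bigr)$. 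Summing over $k$, the right-hand side telescopes, so $\sum_k\Vert\omega^k-\omega^{k+1}\Vert<\infty$; hence $\{\omega^k\}$ is a Cauchy sequence and converges. Having $\bar\omega$ as a limit point, and $\bar\omega$ being critical, we get $\omega^k\to\bar\omega$ with $0\in\partial\Psi(\bar\omega)$.

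The main obstacle is the finite-length estimate of the last paragraph: one must first show the tail of $\{\omega^k\}$ actually enters and stays in the neighborhood $U$ on which the uniformized KL inequality is valid, which requires a simultaneous induction controlling $\Vert\omega^k-\bar\omega\Vert$ and the partial sums $\sum_{j\le k}\Vert\omega^j-\omega^{j+1}\Vert$, exploiting $\varphi(\Psi(\omega^k)-\underline{\Psi})\to 0$. The decrease and relative-error inequalities already established are precisely what make this bookkeeping close, so no new estimate on the algorithm itself is required.
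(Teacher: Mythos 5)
Your proposal is correct and is exactly the argument the paper relies on: the paper itself states this theorem without proof, having only assembled the ingredients (the sufficient decrease inequality, the subgradient bound $\Vert\eta^{k+1}\Vert\le\rho\Vert\omega^k-\omega^{k+1}\Vert$, boundedness of the iterates, and the KL property via semialgebraicity) and implicitly deferring to the standard KL convergence framework of \cite{PALM} and \cite{b13}. Your write-up simply supplies the omitted chaining of these facts --- limit-point analysis, uniformized KL inequality, and the finite-length/Cauchy argument --- and does so correctly, including the bookkeeping caveat about keeping the tail of the sequence in the KL neighborhood.
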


\begin{algorithm}
\renewcommand{\algorithmicrequire}{\textbf{Input:}}
\renewcommand{\algorithmicensure} {\textbf{Output:}}
\caption{The BCD algorithm to approximate rank}
\label{alg:Framwork}
\begin{algorithmic}[1]
\REQUIRE A third order tensor $\mathcal{X}$, an upper bound $R$ of $\mbox{rank}(\mathcal{X})$, and the regularization parameters $\lambda>0$, $\gamma>0$ and the fixed stepsize $\beta$;\\
\ENSURE An approximated tensor $\mathcal{Y}$ with an estimated rank $\hat{R}$;\\
\STATE Given initial guess $\mathcal{X}^0=[A^0,B^0,C^0,\alpha^0]_R$.
\WHILE{stopping criterion not met} 
\STATE Update $A$ :\\
$E=\text{diag}(\alpha) (C \odot B)^T$ \\
$A=(\mathcal{X}_{(1)}E)/(EE^T+\lambda I)$
\STATE Update $B$ : \\
$F=\text{diag}(\alpha) (C \odot A)^T$ \\
$B=(\mathcal{X}_{(2)}F)/(FF^T+\lambda I)$
\STATE Update $C$ :\\
$G=\text{diag}(\alpha) (B \odot A)^T$ \\
$C=(\mathcal{X}_{(3)}G)/(GG^T+\lambda I)$
\STATE Update $\alpha$ :
\FOR{$r=1$ to $R$}
\STATE $M(:,r)= \text{vec}(a_r \circ b_r \circ c_r)$
\ENDFOR
\STATE $y=\alpha-\beta (M^T(M \alpha- \text{vec}(\mathcal{X})))$
\FOR{$r=1$ to $R$}
\STATE $\alpha(r)= \begin{cases} y(r)-\beta  & y(r)>\beta\\ 
0 & \vert y(r) \vert \leq \beta\\
y(r)+\beta & y(r) <-\beta
\end{cases}$
\ENDFOR
\ENDWHILE
\STATE Create tensor $\mathcal{Y}$ with factor matrices $A, B, C$ and coefficients $\alpha$.
\STATE Count the number of non-zero elements of $\alpha$ and assign it to $\hat{R}$.
\RETURN The tensor $\mathcal{Y}$ with the estimated rank $\hat{R}$.
\end{algorithmic}
\end{algorithm}

%\section{Choice of Regularization parameters}

\section{Numerical Experiment and Results}
In this section we test our algorithm on tensors with different rank and dimensions. We randomly generate tensors with specified ranks and compare the performance of our algorithm with other available algorithms such as LRAT \cite{b15}. Next, we apply our algorithm on single moving object videos in order to extract the background and target object. 
\subsection{Tensor Rank Approximation}
In this subsection we test the performance of our algorithm on randomly generated cubic tensors with various dimensions and various rank. The upper bound for the rank of tensors are set to be equal to $\min\{IJ, JK, IK\}$. The results are shown in TABLE I.

%\begin{figure}[hbt!]
%\centering
%\includegraphics{Fig1.png}
%\caption{Relative error of the residual function for a rank five randomly generated tensor .}
%\label{fig}
%\end{figure}

\subsection{Comparison between LRAT and our algorithm}
In this subsection, we compare the performance of our proposed algorithm to LRAT \cite{b15}. We generate a random cubic tensor $\mathcal{A}\in \mathbb{R}^{5\times 5 \times 5}$ where its rank is equal to five. The comparison is based on the residual function as well as the sparsity of vector $\alpha$. The upper bound for the rank of the tested tensor is set to be equal to ten for both algorithms. 

%\begin{table} \label{tbl1}
%\caption{Rank Approximation}
%\begin{center}
%\begin{tabular}{|c|c|c|c|}
%\hline
%&\multicolumn{3}{|c|}{\textbf{Size of Tensor}} \\
%\cline{2-4} 
% & \textbf{\textit{$I, J, K=5$}}& \textbf{\textit{$I, J, K=7$}}& %\textbf{\textit{$I, J, K=10$}} \\
%\hline
%Actual Rank& 5& 6 & 8 \\
%\hline
%Estimated Rank & 6 & 7 & 8\\
%\hline
%\end{tabular}
%\label{tab2}
%\end{center}
%\end{table}

\begin{table} \label{tbl1}

\begin{center}
\begin{tabular}{|c|c|c|c|}
\hline
&\multicolumn{3}{|c|}{\textbf{Size of Tensor}} \\
\cline{2-4} 
 & \textbf{\textit{$I, J, K=5$}}& \textbf{\textit{$I, J, K=7$}}& \textbf{\textit{$I, J, K=10$}} \\
\hline
\textbf{Actual Rank}& 5& 8 & 10 \\
\hline
\textbf{Upper bound}& 10& 15 & 20 \\
\hline
\textbf{Estimated Rank} & 5 & 8 & 12\\
\hline
\textbf{Residual error} &2.85e-1 & 1.34e-1 &1.20e-1 \\
\hline 
\textbf{Relative error} &5.17e-2 &1.05e-2 &5.00e-3 \\
\hline 
\textbf{Time} & 2.23& 3.86& 6.39\\
\hline 
\end{tabular}
\label{tab2}
\end{center}
\caption{Rank Approximation}
\end{table}
\subsection{Application in background extraction of single moving object videos}
In this subsection we apply our algorithm to extract the background of videos. 
See Figure \ref{fig:videoframes}. The video example \cite{bouwmans1,bouwmans2} is a $48 \times 48 \times 51$ with rank $23$ tensor. The relative residual error of $\Vert \mathcal{X} - \sum_r^R \alpha_r a_r \circ b_r \circ c_r \Vert_F^2 $ is $10^{-8}$.

\begin{figure}
  \includegraphics[scale=.85]{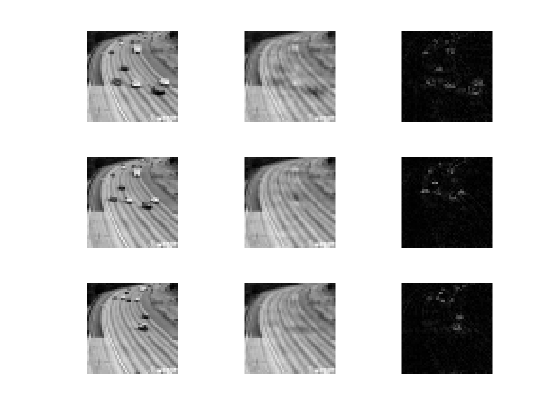}
  \caption{The original video \cite{bouwmans1,bouwmans2} is of the size $48 \times 48 \times 51$. Column 1 shows the original (11th,16th,49th) frames, column 2 shows the reconstruction (background) and column 3 shows the foreground (moving objects).}
  \label{fig:videoframes}
\end{figure}

\begin{figure}
\begin{center}
  \includegraphics[scale=.55]{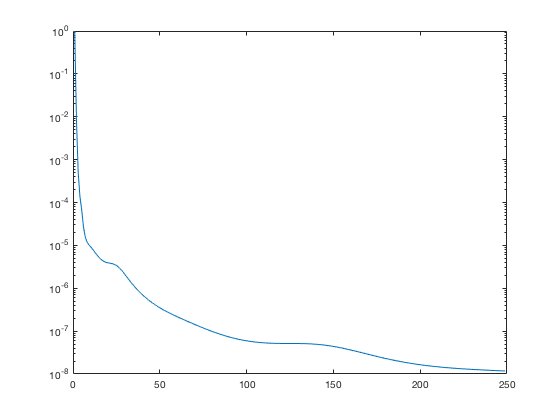}
  \caption{Residual Plot. The x-axis is the number of iterations and y-axis is the relative error term of $\Vert \mathcal{X}- \sum_{r}^{R} \alpha_r a_r \circ b_r \circ c_r  \Vert_F^2$} for the video example in Figure \ref{fig:videoframes}.
  \label{fig:residual}
  \end{center}
\end{figure}

%By using the Khatri-Rao product and tensor matricization, (\ref{decomp1}) can be written in three matricized forms:

\begin{figure}
    \centering
    \subfloat[original image ]{{\includegraphics[width=5cm]{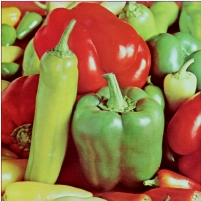} }}
    \qquad
    \subfloat[reconstructed image ]{{\includegraphics[width=5cm]{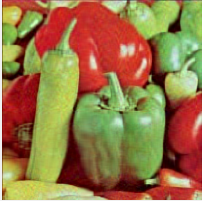} }}
    \caption{The performance of our algorithm on RGB image. The right image illustrates the compressed reconstructed version of the original image.}
    \label{fig:imeg}%
\end{figure}

%\begin{figure}[h]\label{swamp1}
%\centering
%\includegraphics[width=6.3cm]{fifthrNew.pdf}
%\caption{The long flat curve (\emph{swamp}) in the ALS method. The error stays at $10^3$ during the first 8000 iterations. }
%\end{figure}

\section{Conclusion}\label{sec:con}
We presented the iterative algorithm for approximating tensor rank and CP decomposition based on a sparse optimization problem. Specifically, we apply a Tikhonov regularization method for finding the decomposition and a proximal algorithm for the tensor rank. We have also provided convergence analysis and numerical experiments on color images and videos. Overall, this new tensor sparse model and its computational method dramatically improve the accuracy and memory requirements.

\baselineskip=12pt

\end{document}